 \newtheorem{remark}{Remark}
 \newtheorem{theorem}[remark]{Theorem}
 \newtheorem{proposition}[remark]{Proposition}
 \newtheorem{corollary}[remark]{Corollary}
\title{Alliance free sets in Cartesian product graphs}
\author{ Ismael G. Yero$^{1}$, Juan A. Rodr\'{\i}guez-Vel\'{a}zquez$^{1}$,
\\ Sergio Bermudo$^{2}$
\\
$^1${\small Departament d'Enginyeria Inform\`atica i Matem\`atiques,}\\
{\small Universitat Rovira i Virgili,}  {\small Av. Pa\"{\i}sos
Catalans 26, 43007 Tarragona, Spain.} \\{\small
ismael.gonzalez\@@urv.cat, juanalberto.rodriguez\@@urv.cat}
\\$^2${\small Department of Economy, Quantitative Methods and Economic
History} \\ {\small Pablo de Olavide University, Carretera de Utrera
Km. 1, 41013-Sevilla, Spain }\\ {\small sbernav\@@upo.es}
}
\begin{document}

\maketitle

\begin{abstract}

Let $G=(V,E)$ be a graph. For a non-empty subset of vertices $S\subseteq V$, and vertex $v\in V$, let $\delta_S(v)=|\{u\in S:uv\in E\}|$ denote the cardinality of the set of neighbors of $v$ in $S$, and let $\overline{S}=V-S$. Consider the following condition:
\begin{equation}\label{alliancecondition}
\delta_S(v)\ge \delta_{\overline{S}}(v)+k,
\end{equation}
which states that a vertex $v$ has at least $k$ more neighbors in $S$ than it has in $\overline{S}$.
A set $S\subseteq V$ that
satisfies Condition (\ref{alliancecondition}) for every vertex $v \in S$ is called a \emph{defensive} $k$-\emph{alliance}; for every vertex $v$ in the neighborhood of $S$ is called an \emph{offensive} $k$-\emph{alliance}. A subset of vertices $S\subseteq V$, is a \emph{powerful} $k$-\emph{alliance} if it is
both a defensive $k$-alliance and an offensive $(k +2)$-alliance. Moreover, a subset
$X\subset V$ is a defensive (an offensive or a powerful)
$k$-alliance free set if $X$ does not contain any defensive (offensive
or powerful, respectively) $k$-alliance.
In this article we study the relationships  between defensive (offensive, powerful) $k$-alliance free sets in Cartesian product graphs and defensive (offensive, powerful) $k$-alliance free sets in the factor graphs.
\end{abstract}

{\it Keywords:}  Defensive alliance, offensive alliance, alliance
free set, Cartesian product graphs.

{\it AMS Subject Classification numbers:}   05C69; 05C70.

\section{Introduction}


The study of relationships between invariants of Cartesian product graphs and invariants of its
factor graphs appears frequently in researches about graph theory.
In this sense, there are important open problems which are being
investigated now. For instance, the Vizing's conjecture
\cite{bookdom1,bookdom2,vizing}, which is one of the most known
open problems in graph theory, states that the domination number of the
Cartesian product of two graphs is at least equal to the product of
the domination numbers of these two graphs. Some variations and
partial results about this conjecture have been developed in the last
years, like those in \cite{bresar1,bresar2,clark,sun}.

Apart from the domination number, there are several invariants which have been
studied in Cartesian product graphs. For instance, the
geodetic number
\cite{geodetic,bresar2,gera,pelayo2}, the metric dimension
\cite{pelayo1}, the partition dimension \cite{yerocartpartres}, the Menger number \cite{Menger}, the $k$-domination number \cite{kdomination}, the offensive $k$-alliance number  \cite{off2008},  the $k$-alliance partition number   \cite{c_nxc_m,defPartit} and the offensive $k$-alliance partition number \cite{PartOff}.

This article concerns the study of alliance free sets in Cartesian product graphs.
Since (defensive, offensive and powerful) alliances in graphs were first introduced by Kristiansen et al. \cite{alliancesOne}, several
authors have studied their mathematical properties \cite{off2008,c_nxc_m,partitionGrid,nosotros,tesisshafique,sd2009,kdaf1,kdaf2,kdaf3,GlobOff,PartOff,tesisisma,defPartit}  (the reader is referred to the Ph.D. Thesis \cite{tesisisma} for a more complete list of references). Applications of alliances can be found in the Ph. D. Thesis \cite{tesisshafique} where
the author studied problems of partitioning graphs into alliances and its
application to data clustering. On the other hand, defensive alliances represent
the mathematical model of web communities, by adopting the definition
of Web Community proposed by Flake et al. in \cite{flake}, ``a Web
Community is a set of web pages having more hyperlinks (in either direction)
to members of the set than to non-members". Other applications of alliances
were presented  in \cite{Bio} (where alliances were used in a quantitative analysis of
secondary RNA structure),  \cite{foodwebs} (where alliances were used in the study of  predator-prey
models on complex networks),  \cite{clynical} (where alliances were used in the study of  spatial models of cyclical
population interactions) and  \cite{monopoly} (where alliances were used as a model of monopoly).

In this work we continue the previous studies \cite{tesisshafique,sd2009,kdaf2,kdaf3,nosotros} on $k$-alliance free sets and $k$-alliance cover sets focusing our attention on the particular case of Cartesian product graphs.
We study the relationships  between defensive (offensive, powerful) $k$-alliance free sets in Cartesian product graphs and defensive (offensive, powerful) $k$-alliance free sets in the factor graphs.
The plan of the article is the following: In Section \ref{NT} we present the notation and terminology used and we recall the definitions of Cartesian product graph, defensive (offensive and powerful) $k$-alliance,  defensive (offensive and powerful) $k$-alliance free set and defensive (offensive and powerful) $k$-alliance cover set.
 Section \ref{Defensive} is devoted to the study of defensive $k$-alliances. More specifically, we give a sufficient condition for the existence of defensive $k$-alliance free sets in cartesian product graphs and  we study the relationships  between the maximum cardinality of a defensive $k$-alliance free set in Cartesian product graphs and several invariants of the factor graphs, including the order and the independence number.
  Analogously,   sections \ref{offensive} and \ref{Powerful}, respectively, are devoted to the study of offensive and powerful $k$-alliance free sets. In section \ref{concl} we present the conclusions.

\section{Notation and terminology} \label{NT} In this paper
$G=(V,E)$ denotes a simple graph of  order $n$,
minimum degree $\delta$ and maximum degree $\Delta$.  The independence number of $G$ is denoted  by $\alpha(G)$.  For a non-empty
set $S\subseteq V$  and a vertex $v\in V$,
$\delta_S(v)$ denotes the number of neighbors $v$ has in $S$  and $\delta(v)$  denotes  the degree of $v$.  The complement
of the set $S$ in $V$ is denoted by $\overline{S}$. The set of vertices of $\overline{S}$ which are adjacent to at least one vertex in $S$ is denoted by $\partial S$.

A non-empty set of vertices
$S\subseteq V$ is called a {\em defensive} (respectively, an \emph{offensive})
$k$-{\em alliance} in $G$ if for every $v\in S$ (respectively, $v\in
\partial S$), $\delta_S(v) \ge \delta_{\overline{S}}(v)+k,$ where
$k\in \{-\Delta,..., \Delta\}$ (respectively, $k\in \{2-\Delta,..., \Delta\}$). Also, a non-empty set of vertices $S\subseteq
V$ is called a {\em powerful} $k$-{\em alliance} in $G$ if it
is both, defensive $k$-alliance and offensive ($k+2$)-alliance, $k\in \{-\Delta,..., \Delta-2\}$. Notice that, since $V$ is an offensive $k$-alliance for every $k\in \{2-\Delta,..., \Delta\}$, $V$ is a powerful $k$-alliance if and only if it is a defensive $k$-alliance.

A set $X\subseteq V$ is ({\em defensive, offensive, powerful})
$k$-{\em alliance free},  ($k$-daf, $k$-oaf, $k$-paf), if
for all (defensive, offensive, powerful) $k$-alliance $S$,
$S\setminus X\neq\emptyset$, i.e., $X$ does not contain any
(defensive, offensive, powerful) $k$-alliance as a subset
\cite{kdaf1}.

Associated with the characteristic sets defined above we have the
following invariants:

\begin{itemize}

\item[] $\phi_k^d(G)$: maximum cardinality of a  $k$-daf set in $G$, $k\in \{-\Delta,..., \Delta\}$.

\item[]$\phi_k^o(G)$: maximum cardinality of a $k$-oaf  set in
$G$, $k\in \{2-\Delta,..., \Delta\}$.

\item[]  $\phi_k^{p}(G)$: maximum cardinality of a
$k$-paf  set in  $G$, $k\in \{-\Delta,..., \Delta-2\}$.
\end{itemize}

We now state the following fact on (defensive, offensive and powerful) $k$-alliance free sets that will be useful throughout this article.

\begin{remark}\label{monotoniafreeset}
If $X$ is a $k$-alliance free set and $k<k'$, then $X$ is a $k'$-alliance
free set.
\end{remark}

A set $Y \subseteq V$ is a ({\em defensive, offensive, powerful})
{\em $k$-alliance cover set} if
for all (defensive, offensive, powerful) $k$-alliance $S$, $S\cap
Y\neq\emptyset$, i.e., $Y$ contains at least one vertex from each
(defensive, offensive, powerful) $k$-alliance of $G$.

The following duality between $k$-alliance cover sets and $k$-alliance free sets allows us to study $k$-alliance cover sets from the results obtained on $k$-alliance free sets, so in this article we only consider the study of $k$-alliance free sets.

\begin{remark}\label{thDual}
$X$ is a $k$-alliance cover set if and
only if $\overline{X}$ is a $k$-alliance
free set.
\end{remark}

We recall that the Cartesian product of two graphs $G_1=(V_1,E_1)$ and
$G_2=(V_2,E_2)$ is the graph $G_1\times G_2=(V,E)$, such that
$V=\{(a,b)\;:\;a\in V_1,\;b\in V_2\}$ and two vertices $(a,b)\in V$
and $(c,d)\in V$ are adjacent in  $G_1\times G_2$ if and only if, either $a=c$ and $bd\in E_2$
or $b=d$ and $ac\in E_1$.

For a set $A\subseteq V_1\times V_2$ we denote by $P_{V_i}(A)$ the projection of $A$ over $V_i$, $i\in \{1,2\}$.

\section{Defensive $k$-alliance free sets in Cartesian product graphs} \label{Defensive}

To begin with the study we present the following straightforward result.
 \begin{remark} \label{remark1}
 Let $G_i$ be a graph of order $n_i$, minimum degree $\delta_i$ and maximum degree $\Delta_i$, $i\in \{1,2\}$. Then, for every $k \in\{ 1-\delta_1-\delta_2,...,\Delta_1+\Delta_2\}$,
$$\phi_{_k}^d(G_1\times G_2)\ge \alpha(G_1)\alpha(G_2)+\min\{n_1-\alpha(G_1),n_2-\alpha(G_2)\}.$$
\end{remark}

\begin{proof}
For every  graph $G$ of minimum degree $\delta$ and maximum degree $\Delta$,
any independent set in $G$ is a $k$-daf set for $k\in \{1-\delta, ...,\Delta\}$.
Hence, $\phi_{_k}^d(G_1\times G_2)\ge \alpha(G_1\times G_2)$, for every $k\in \{1-\delta_1-\delta_2, ...,\Delta_1+\Delta_2\}$, and by  the Vizing's inequality,
$\alpha(G_1\times G_2)\ge \alpha(G_1)\alpha(G_2)+\min\{n_1-\alpha(G_1),n_2-\alpha(G_2)\},$ we obtain the  result.
\end{proof}

Let $G_1$ be the star graph of order $t+1$ and let $G_2$ be the path graph of order $3$. In this case, $\phi_k^d(G_1\times G_2)=2t+1$ for $k\in \{-1,0\}$. Therefore, the above bound is tight. Even so, Corollary \ref{CoroTh1def(i)} (ii) improves the above bound for the cases where $\phi_{_{k_i}}^d(G_i)>\alpha(G_i)$, for some  $i\in \{1,2\}$.

\begin{theorem}\label{th1}
Let $G_i=(V_i,E_i)$ be a simple graph of maximum degree $\Delta_i$,  $i\in \{1,2\}$, and let $S\subseteq  V_1\times V_2 $.
Then the following assertions hold.
\begin{itemize}
\item[{\rm (i)}] If  $P_{V_i}(S)$ is a $k_i$-daf set in $G_i$, then $S$ is a $(k_i+\Delta_j)$-daf set in $G_1\times G_2$, where $j\in \{1,2\}$, $j\ne i.$
\item[{\rm (ii)}]
If for every  $i\in \{1,2\}$, $P_{V_i}(S)$ is a $k_i$-daf set in $G_i$,   then $S$ is a $(k_1+k_2-1)$-daf set in $G_1\times G_2$.

\end{itemize}
\end{theorem}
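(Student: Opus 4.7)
Both parts are proved by contraposition. Suppose $S\subseteq V_1\times V_2$ contains a defensive alliance $A$ in $G_1\times G_2$ with the prescribed parameter; the goal is to produce a defensive $k_i$-alliance inside $P_{V_i}(S)$ for some $i\in\{1,2\}$, thereby contradicting the corresponding daf hypothesis.

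The central bookkeeping device is the horizontal/vertical splitting of Cartesian-product neighborhoods. For $(a,b)\in V_1\times V_2$ set
\[
\delta^1_A((a,b)) := |\{c\in V_1: ac\in E_1,\ (c,b)\in A\}|,\quad \delta^2_A((a,b)) := |\{d\in V_2: bd\in E_2,\ (a,d)\in A\}|,
\]
and analogously for $\overline{A}$, so that $\delta_A((a,b))=\delta^1_A((a,b))+\delta^2_A((a,b))$ and the same for $\overline{A}$. Since $(c,b)\in A$ implies $c\in P_{V_1}(A)$ while $c\notin P_{V_1}(A)$ implies $(c,b)\in\overline{A}$, one obtains the projection bounds
\[
\delta^1_A((a,b)) \le \delta_{P_{V_1}(A)}(a),\qquad \delta^1_{\overline{A}}((a,b)) \ge \delta_{\overline{P_{V_1}(A)}}(a) \quad (\text{in } G_1),
\]
together with the symmetric pair in the second coordinate; observe that the right-hand sides depend only on $a$, not on $b$.

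For (i) (assume $i=1$, $j=2$; the other case is symmetric), at any $(a,b)\in A$ one has $\delta^2_A((a,b))-\delta^2_{\overline{A}}((a,b))\le\Delta_2$, so the $(k_1+\Delta_2)$-alliance condition forces $\delta^1_A((a,b))-\delta^1_{\overline{A}}((a,b))\ge k_1$. Feeding this through the horizontal projection bounds at each $a\in P_{V_1}(A)$ (for any $b$ with $(a,b)\in A$) yields $\delta_{P_{V_1}(A)}(a)-\delta_{\overline{P_{V_1}(A)}}(a)\ge k_1$ in $G_1$. Hence $P_{V_1}(A)\subseteq P_{V_1}(S)$ is a defensive $k_1$-alliance in $G_1$, the required contradiction.

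For (ii), if $P_{V_1}(A)$ already happens to be a defensive $k_1$-alliance in $G_1$ we are done as above. Otherwise, fix a witness $a_0\in P_{V_1}(A)$ with $\delta_{P_{V_1}(A)}(a_0)-\delta_{\overline{P_{V_1}(A)}}(a_0)\le k_1-1$ and form the non-empty row slice $T:=\{b\in V_2:(a_0,b)\in A\}\subseteq P_{V_2}(A)$. Because the projection bounds at $a_0$ are independent of $b$, one has $\delta^1_A((a_0,b))-\delta^1_{\overline{A}}((a_0,b))\le k_1-1$ for every $b\in T$; substituting into the $(k_1+k_2-1)$-alliance condition at $(a_0,b)\in A$ leaves $\delta^2_A((a_0,b))-\delta^2_{\overline{A}}((a_0,b))\ge k_2$. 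By construction $\delta^2_A((a_0,b))$ and $\delta^2_{\overline{A}}((a_0,b))$ are precisely the $G_2$-degrees of $b$ in $T$ and in $\overline{T}$, so $T\subseteq P_{V_2}(S)$ is a defensive $k_2$-alliance in $G_2$, contradicting the $k_2$-daf hypothesis. The main subtlety in (ii) is resisting the temptation to show that one of $P_{V_1}(A)$, $P_{V_2}(A)$ must itself be the alliance: once $P_{V_1}(A)$ is known to fail, one should descend to the single row slice $T$ through the witness $a_0$, for then the uniform horizontal bound consumes only $k_1-1$ of the combined condition and leaves a full $k_2$-alliance visible across all of $T$.
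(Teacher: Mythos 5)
Your proof is correct and, modulo being phrased contrapositively, it is essentially the paper's argument: the horizontal/vertical neighborhood splitting with the projection bounds is exactly the paper's estimate $\delta_A(a,b)\le\delta_{P_{V_1}(A)}(a)+\delta_{P_{V_2}(A)}(b)$, and your row slice $T$ through the witness $a_0$ is precisely the paper's set $P_{V_2}(A_x)$, at a vertex of which the paper locates the failure of the $(k_1+k_2-1)$-condition. No substantive difference in method.
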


\begin{proof}
Let $A\subseteq S$ and we suppose $P_{V_1}(S)$ is a $k_1$-daf set in $G_1$. Since $P_{V_1}(A)\subseteq P_{V_1}(S)$,  there exists $a\in P_{V_1}(A)$ such that $\delta_{P_{V_1}(A)}(a)<\delta_{\overline{P_{V_1}(A)}}(a)+k_1$. If we take   $b\in V_2$ such that $(a,b)\in A$, then
$$\delta_A(a,b)\le \delta_{P_{V_1}(A)}(a)+\delta_{P_{V_2}(A)}(b)<\delta_{\overline{P_{V_1}(A)}}(a)+k_1+\delta(b)\le \delta_{\overline{P_{V_1}(A)}}(a)+k_1+\Delta_2.$$
Thus, $A$ is not a defensive $(k_1+\Delta_2)$-alliance in $G_1\times G_2$. Therefore, (i) follows.

In order to prove (ii), let $x\in X=P_{V_1}(A)$ such that $\delta_{X}(x)<\delta_{\overline{X}}(x)+k_1$.  Let $A_x\subseteq A$ be the set composed by the elements of $A$ whose first component is $x$. On the other hand, since  $P_{V_2}(S)$ is a $k_2$-daf set and $Y=P_{V_2}(A_x)\subseteq P_{V_2}(S)$, there exists $y \in Y$ such that $\delta_Y(y)<\delta_{\overline{Y}}(y)+k_2$. Notice that $(x,y)\in A$. Let $A_y\subseteq A$ be the set composed by the elements of $A$ whose second component is $y$.
Hence,
\begin{align*}\delta_A(x,y)&=\delta_{A_x}(x,y)+\delta_{A_y}(x,y)\\&\le\delta_Y(y)+\delta_X(x)\\
&<\delta_{\overline{Y}}(y)+\delta_{\overline{X}}(x)+k_1+k_2-1\\
&\le\delta_{\overline{A_x}}(x,y)-\delta(x)+\delta_{\overline{A_y}}(x,y)-\delta(y)+k_1+k_2-1\\
&\le\delta_{\overline{A_x}}(x,y)+\delta_{\overline{A_y}}(x,y)+k_1+k_2-1\\&= \delta_{\overline{A}}(x,y)+k_1+k_2-1.\end{align*}
Thus, $A$ is not a defensive $(k_1+k_2-1)$-alliance in $G_1\times G_2$ and, as a consequence, (ii) follows.
\end{proof}

\begin{corollary}\label{CoroTh1def(i)}Let $G_l$ be a graph of order $n_l$, maximum degree
$\Delta_l$ and minimum degree $\delta_l$, with $l\in \{1,2\}$. Then the following assertions hold.
\begin{itemize}
\item[{\rm (i)}]
  For every
$k\in \{\Delta_j-\Delta_i,..., \Delta_i+\Delta_j\}$ $(i,j\in \{1,2\}$, $i\ne j)$,
$$\phi_k^d(G_1\times G_2)\ge
n_j\phi_{k-\Delta_j}^d(G_i).$$

\item[{\rm (ii)}] For every $k_i \in\{ 1-\delta_i,...,\Delta_i\}$, $i\in \{1,2\}$,
$$\phi_{_{k_1+k_2-1}}^d(G_1\times G_2)\ge \phi_{_{k_1}}^d(G_1)\phi_{_{k_2}}^d(G_2)+\min\{n_1-\phi_{_{k_1}}^d(G_1),n_2-\phi_{_{k_2}}^d(G_2)\}.$$
\end{itemize}
\end{corollary}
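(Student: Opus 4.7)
The plan is to derive (i) directly from Theorem \ref{th1}(i) applied to a cylindrical set, and to establish (ii) via Theorem \ref{th1}(ii) combined with an extra ``matching trick'' analogous to the construction underlying Vizing's inequality for the independence number.

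For part (i), I would let $X$ be a maximum $(k-\Delta_j)$-daf set in $G_i$ and take $S = X \times V_j \subseteq V_1 \times V_2$. Since $P_{V_i}(S) = X$ is $(k-\Delta_j)$-daf, Theorem \ref{th1}(i) immediately yields that $S$ is a $k$-daf set in $G_1 \times G_2$, hence $\phi_k^d(G_1 \times G_2) \geq |S| = n_j\, \phi_{k-\Delta_j}^d(G_i)$. This case is routine once the right $S$ is identified.

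For part (ii), I would take $X_i$ to be a maximum $k_i$-daf set in $G_i$, of cardinality $r_i := \phi_{k_i}^d(G_i)$, and assume without loss of generality that $n_1 - r_1 \leq n_2 - r_2$. I would then fix an injective map $f : \overline{X_1} \to \overline{X_2}$, set $T = \{(v, f(v)) : v \in \overline{X_1}\}$, and propose as candidate
$$S = (X_1 \times X_2) \cup T,$$
whose cardinality is exactly $r_1 r_2 + (n_1 - r_1) = \phi_{k_1}^d(G_1)\phi_{k_2}^d(G_2) + \min\{n_1 - r_1, n_2 - r_2\}$. The construction is inspired by the proof of Vizing's inequality recalled in Remark \ref{remark1}.

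The main obstacle is verifying that $S$ is indeed $(k_1+k_2-1)$-daf, which I would do by contradiction: suppose that some nonempty $A \subseteq S$ is a defensive $(k_1+k_2-1)$-alliance and split into two cases. If $A \cap T = \emptyset$, then $A \subseteq X_1 \times X_2$, so each projection $P_{V_i}(A)$ is a subset of the $k_i$-daf set $X_i$ and hence itself $k_i$-daf; Theorem \ref{th1}(ii) applied to $A$ then forbids $A$ from being a defensive $(k_1+k_2-1)$-alliance. Otherwise some $(v, f(v))$ lies in $A \cap T$, and the delicate step is to check that this vertex is isolated in the subgraph of $G_1 \times G_2$ induced by $S$: an $(X_1 \times X_2)$-neighbor of $(v, f(v))$ would have to share a coordinate with it, but $v \notin X_1$ and $f(v) \notin X_2$ rule this out; a $T$-neighbor would require a distinct $v' \in \overline{X_1}$ with $f(v') = f(v)$, contradicting injectivity of $f$. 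Consequently $\delta_A(v, f(v)) = 0$ while $\delta_{\overline A}(v, f(v)) \geq \delta_1 + \delta_2$, so the defensive condition forces $0 \geq \delta_1 + \delta_2 + k_1 + k_2 - 1 \geq 1$ (using $k_i \geq 1 - \delta_i$), a contradiction.
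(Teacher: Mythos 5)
Your proof is correct and takes essentially the same approach as the paper: part (i) via the cylinder $S=X\times V_j$ and Theorem \ref{th1}(i), and part (ii) via the set $(X_1\times X_2)\cup T$ with $T$ a matching between the complements $\overline{X_1}$ and $\overline{X_2}$, which is exactly the paper's construction (the paper indexes the matching explicitly as $\{(u_{r+i},v_{s+i})\}$ rather than via an injection $f$). Your check that the vertices of $T$ are isolated in the subgraph induced by $S$, together with $k_i\ge 1-\delta_i$, is just a more detailed spelling-out of the paper's one-line justification that $\delta_S(x)=0$ and $k_i>-\delta_i$ suffice.
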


\begin{proof}
By Theorem \ref{th1} (i)  we conclude that for every $k_i$-daf set $S_i$ in $G_i$, $i\in \{1,2\}$, the sets $S_1\times V_2$ and $V_1\times S_2$ are $(k_1+\Delta_2)$-daf  and  $(k_2+\Delta_1)$-daf, respectively,  in $G_1\times G_2$. Therefore, (i) follows.

In order to prove (ii), let $V_1=\{u_1,u_2,...,u_{n_1}\}$ and $V_2=\{v_1,v_2,...,v_{n_2}\}$. Moreover, let $S_i$ be a $k_i$-daf set of maximum cardinality  in $G_i$, $i\in \{1,2\}$. We suppose $S_1=\{u_1,...,u_r\}$ and $S_2=\{v_1,...,v_s\}$. By Theorem  \ref{th1} (ii) we deduce that $S_1\times S_2$ is a $(k_1+k_2-1)$-daf set in $G_1\times G_2$. Now
let $X=\{(u_{r+i},v_{s+i}), i=1,...,t\}$, where $t=\min\{n_1-r,n_2-s\}$ and let $S= X\cup (S_1\times S_2)$. Since, for every $x\in X$, $\delta_S(x)=0$ and $k_i > -\delta_i$, $i\in \{1,2\}$,
we obtain that $S$ is a $(k_1+k_2-1)$-daf set in $G_1\times G_2$.
Thus, $\phi_{_{k_1+k_2-1}}^d(G_1\times G_2)\ge |S|= \phi_{_{k_1}}^d(G_1)\phi_{_{k_2}}^d(G_2)+\min\{n_1-\phi_{_{k_1}}^d(G_1),n_2-\phi_{_{k_2}}^d(G_2)\}$.
\end{proof}

Now we state the following fact that will be useful for an easy understanding of several examples in this paper.

\begin{proposition}\label{remarktree}
Let $G$ be a graph of order $n$ and maximum degree $\Delta$. Then $\phi_k^d(G)=n$, for each of the following  cases:
\begin{itemize}
\item[{\rm (i)}] $G$ is a tree of maximum degree $\Delta\ge 2$ and $k\in \{2,...,\Delta\}$.
\item[{\rm (ii)}] $G$ is a planar graph of maximum degree $\Delta\ge 6$ and $k\in \{6,...,\Delta\}$.
\item[{\rm (iii)}] $G$ is a planar triangle-free graph of maximum degree $\Delta\ge 4$ and $k\in \{4,...,\Delta\}$.
\end{itemize}
\end{proposition}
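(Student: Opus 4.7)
The plan is to show that in each of the three cases the graph $G$ admits no defensive $k$-alliance at all. This is equivalent to saying that $V(G)$ itself is a $k$-daf set, and hence $\phi_k^d(G)=n$. By Remark~\ref{monotoniafreeset}, once $V(G)$ is $k$-daf it is automatically $k'$-daf for every $k'\geq k$ in the admissible range, so it suffices to treat only the smallest value of $k$ in each case, namely $k=2$ in (i), $k=6$ in (ii) and $k=4$ in (iii).

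The crux is a simple reduction to an induced-subgraph minimum-degree condition. If $S\subseteq V$ were a defensive $k$-alliance then $S$ would be non-empty and every $v\in S$ would satisfy
\[ \delta_S(v) \;\geq\; \delta_{\overline{S}}(v)+k \;\geq\; k, \]
so the induced subgraph $G[S]$ would have minimum degree at least $k$. Therefore it is enough to prove, in each case, that $G$ has no non-empty induced subgraph $H$ with $\delta(H)\geq k$.

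I would then finish by invoking three classical degree bounds. In case (i), any induced subgraph of a tree is itself a forest, so it contains a leaf or an isolated vertex, giving $\delta(H)\leq 1<2$. In case (ii), any induced subgraph of a planar graph is planar, and the standard corollary of Euler's formula that every planar graph has a vertex of degree at most $5$ yields $\delta(H)\leq 5<6$. In case (iii), any induced subgraph of $G$ is planar and triangle-free, so the sharper bound $|E(H)|\leq 2|V(H)|-4$ derived from Euler's formula produces a vertex of degree at most $3$, giving $\delta(H)\leq 3<4$.

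I do not foresee any real obstacle here: the argument is essentially a repackaging of the above classical facts through the single observation that a defensive $k$-alliance forces its own induced subgraph to have minimum degree at least $k$. The only minor point to double-check is that this reduction uses nothing beyond $\delta_{\overline{S}}(v)\geq 0$, which is automatic, so no information about how $S$ sits inside $G$ is needed.
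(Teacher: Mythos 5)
Your proposal is correct and follows essentially the same route as the paper: both reduce a hypothetical defensive $k$-alliance $S$ to the statement that $G[S]$ has minimum degree at least $k$, and then contradict this with the standard degeneracy bounds for forests, planar graphs, and triangle-free planar graphs. Your derivation of $\delta_S(v)\ge k$ via $\delta_{\overline{S}}(v)\ge 0$ is in fact slightly more direct than the paper's (which first establishes $\delta(v)\ge k$), but the substance is identical.
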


\begin{proof}
Suppose $S$ is a defensive $k$-alliance in $G=(V,E)$. That is, for every $v\in S$, it follows
\begin{equation}\label{condiciondefensivagrado}
2\delta_S(v)\ge \delta(v)+k.
\end{equation}
     If  some vertex $v\in S$ satisfies  $\delta(v)<k$, then equation  (\ref{condiciondefensivagrado}) leads to $\delta_S(v) > \delta(v)$, a contradiction. Hence, for every $v\in S$ we have $\delta(v)\ge k$ and, as a consequence,  equation  (\ref{condiciondefensivagrado}) leads to $\delta_S(v)\ge k$. Now, let $m_s$ be the size of the subgraph induced by $S$. Then  we have
\begin{equation} \label{condicionmedidasubgrafo}2m_s=\displaystyle\sum_{v\in S}\delta_S(v)\ge k|S|.\end{equation} Case (i). Since $G$ is a tree, we obtain $2(|S|-1)\ge 2m_s\ge k|S|\ge 2|S|$, a contradiction.

For the cases (ii) and (iii) we have $|S|\ge 3$, due to that if $|S|\le 2$, then equation (\ref{condiciondefensivagrado}) leads to $2\ge \delta(v)+k$, a contradiction.
It is well-known that the size of a planar graph  of order $n'\ge 3$ is bounded above by $3(n'-2)$. Moreover, in the case of triangle-free graphs the bound is $2(n'-2)$. Therefore, in case (ii) we have $m_s\le 3(|S|-2)$ and, as a consequence, equation (\ref{condicionmedidasubgrafo}) leads to $6(|S|-2)\ge k|S|\ge 6|S|$, a contradiction. Analogously, in case (iii) we have $m_s\le 2(|S|-2)$ and, as a consequence, equation (\ref{condicionmedidasubgrafo}) leads to $4(|S|-2)\ge k|S|\ge 4|S|$, a contradiction.
\end{proof}

We emphasize that Corollary \ref{CoroTh1def(i)}  and Proposition \ref{remarktree} lead to infinite families of graphs whose Cartesian product satisfies $\phi_k^d(G_1\times G_2)=n_1n_2.$ For instance, if
$G_1$ is a tree of order $n_1$ and maximum degree $\Delta_1\ge 2$, $G_2$ is a graph of order $n_2$ and  maximum degree $\Delta_2$, and $k\in \{2+\Delta_2,...,\Delta_1+\Delta_2\}$, we have $\phi_k^d(G_1\times G_2)=\phi_{k-\Delta_2}^d(G_1)n_2=n_1n_2.$
In particular, if $G_2$ is a cycle graph, then $\phi_4^d(G_1\times G_2)=n_1n_2.$

Another example of equality in Corollary \ref{CoroTh1def(i)} (ii) is obtained, for instance, taking the
Cartesian product of the star graph $S_t$ of order $t+1$ and the path graph $P_r$ of order
$r$. In that case, for $G_1=S_t$ we have $\delta_1=1$,
$n_1=t+1$ and $\phi_0^d(G_1)=t$, and, for $G_2=P_r$ we have
$\delta_2=1$, $n_2=r$ and $\phi_1^d(G_2)=r-1$. Therefore,
$
\phi_0^d(G_1)\phi_1^d(G_2)+\min\{n_1-\phi_0^d(G_1),n_2-\phi_1^d(G_2)\}=t(r-1)+1.
$
On the other hand, it is not difficult to check that, if we take all
leaves belonging to the copies of $S_t$ corresponding to the first $r-1$
vertices of $G_2$ and we add the vertex of degree $t$ belonging to the
last copy of $S_t$, we obtain a maximum defensive $0$-alliance free set  of cardinality $t(r-1)+1$ in the
graph $G_1\times G_2$, that is,
$\phi_0^d(G_1\times G_2)=t(r-1)+1$. This example also shows
that this  bound is better than the  bound obtained in
Remark \ref{remark1}, which is $t\left\lceil\frac{r}{2}\right\rceil+1$. In this
particular case, both  bounds are equal if and only if $r=2$ or
$r=3$.

\begin{figure}[h]
\begin{center}\label{fig1}
\includegraphics[width=4.5cm]{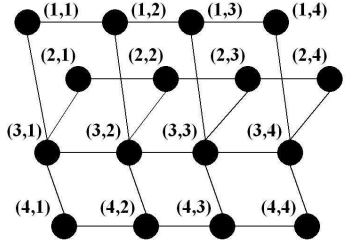}
\caption{This graph is the Cartesian product $S_3\times P_4$ where
$S=\{(1,1),(2,1),(4,1),(1,2),$ $(2,2),(4,2),(1,3),(2,3),(4,3),(3,4)\}$
is a maximum defensive $0$-alliance free set.}
\end{center}
\end{figure}

\begin{theorem}
Let $G_i=(V_i,E_i)$ be a graph and let $S_i\subseteq V_i$, $i\in
\{1,2\}$. If $S_1\times S_2$ is a $k$-daf set in
$G_1\times G_2$ and $S_2$ is a
defensive $k'$-alliance in $G_2$, then $S_1$ is a
$(k-k')$-daf set in $G_1$.
\end{theorem}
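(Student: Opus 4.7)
The plan is to argue by contraposition: assume that $S_1$ does contain a defensive $(k-k')$-alliance $A_1\subseteq S_1$ in $G_1$, and then manufacture, inside $S_1\times S_2$, a defensive $k$-alliance in $G_1\times G_2$, which contradicts the hypothesis that $S_1\times S_2$ is $k$-daf. The natural candidate subset is $A_1\times S_2$, and essentially everything reduces to the fact that in a Cartesian product the neighborhood of $(a,b)$ splits cleanly into a ``horizontal'' part (coming from edges of $G_1$ incident with $a$) and a ``vertical'' part (coming from edges of $G_2$ incident with $b$).

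First I would fix an arbitrary $(a,b)\in A_1\times S_2$ and write down the two basic identities
\[
\delta_{A_1\times S_2}(a,b)=\delta_{A_1}(a)+\delta_{S_2}(b),
\qquad
\delta_{\overline{A_1\times S_2}}(a,b)=\delta_{\overline{A_1}}(a)+\delta_{\overline{S_2}}(b),
\]
which follow directly from the adjacency rule of $G_1\times G_2$ together with the fact that $a\in A_1$ and $b\in S_2$ (so a horizontal neighbor of $(a,b)$ lies in $A_1\times S_2$ iff its first coordinate is in $A_1$, and similarly for vertical neighbors).

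Next I would simply add the two defining alliance inequalities. Since $A_1$ is a defensive $(k-k')$-alliance in $G_1$ we have $\delta_{A_1}(a)\ge\delta_{\overline{A_1}}(a)+(k-k')$, and since $S_2$ is a defensive $k'$-alliance in $G_2$ we have $\delta_{S_2}(b)\ge\delta_{\overline{S_2}}(b)+k'$. Adding these and using the identities above yields
\[
\delta_{A_1\times S_2}(a,b)\ge \delta_{\overline{A_1\times S_2}}(a,b)+k
\]
for every $(a,b)\in A_1\times S_2$, so $A_1\times S_2$ is a defensive $k$-alliance in $G_1\times G_2$. Since $A_1\times S_2\subseteq S_1\times S_2$, this contradicts the assumption that $S_1\times S_2$ is $k$-daf, and we conclude that $S_1$ contains no defensive $(k-k')$-alliance, i.e.\ $S_1$ is $(k-k')$-daf in $G_1$.

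I do not anticipate a real obstacle: the only point that requires some care is the correct bookkeeping of neighbors in $G_1\times G_2$, namely the observation that the two ``rows'' of neighbors behave independently, so that the membership of $(a,b)$'s neighbors in $A_1\times S_2$ is controlled by $A_1$ alone on the $V_1$-side and by $S_2$ alone on the $V_2$-side. Once that split is written out, the rest is just adding the two alliance inequalities with compatible constants $k-k'$ and $k'$.
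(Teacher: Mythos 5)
Your proof is correct and is essentially the paper's argument run in the contrapositive direction: the paper takes an arbitrary $S\subseteq S_1$, notes $S\times S_2$ is not a defensive $k$-alliance, and projects a failing vertex down using the $k'$-alliance inequality for $S_2$, which is exactly the same degree decomposition $\delta_{A\times S_2}(a,b)=\delta_A(a)+\delta_{S_2}(b)$ and the same combination of the two inequalities that you use. No substantive difference.
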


\begin{proof}
If $S\subseteq S_1$, then $S\times
S_2\subseteq S_1\times S_2$ is a $k$-daf set in
$G_1\times G_2$. So, there exists $(a,b)\in S\times S_2$
such that $\delta_{S\times S_2}(a,b)<\delta_{\overline{S\times
S_2}}(a,b)+k.$ Thus, we have
\begin{equation}\label{equat-1}
\delta_S(a)+\delta_{S_2}(b)=\delta_{S\times
S_2}(a,b)<\delta_{\overline{S\times
S_2}}(a,b)+k=\delta_{\overline{S}}(a)+\delta_{\overline{S_2}}(b)+k.
\end{equation}
As $S_2$ is a defensive $k'$-alliance in $G_2$, for every
$b\in S_2$ we have,
$\delta_{S_2}(b)\ge\delta_{\overline{S_2}}(b)+k'$. Hence, from
equation (\ref{equat-1}) we obtain
$\delta_S(a)<\delta_{\overline{S}}(a)+k-k'$. Therefore, $S$ is not a defensive $(k-k')$-alliance in $G_1$ and, as a consequence, $S_1$ is a $(k-k')$-daf set.
\end{proof}

Taking into account that $V_2$ is a defensive $\delta_2$-alliance in
$G_2$ we obtain the following result.

\begin{corollary}\label{otrocoro}
Let $G_i=(V_i,E_i)$ be a graph,  $i\in
\{1,2\}$. Let  $\delta_2$ be the minimum degree of $G_2$ and let $S_1\subseteq V_1$. If $S_1\times V_2$ is a $k$-daf set in
$G_1\times G_2$, then $S_1$ is a $(k-\delta_2)$-daf set in
$G_1$.
\end{corollary}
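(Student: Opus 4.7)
The plan is to derive this corollary directly from the preceding theorem by specializing $S_2 = V_2$. The preceding theorem concludes that $S_1$ is a $(k-k')$-daf set in $G_1$ whenever $S_1 \times S_2$ is a $k$-daf set in $G_1 \times G_2$ and $S_2$ is a defensive $k'$-alliance in $G_2$. Consequently, it suffices to exhibit a parameter $k'$ for which the full vertex set $V_2$ is itself a defensive $k'$-alliance in $G_2$, and then the hypothesis $S_1\times V_2$ being $k$-daf immediately yields the desired conclusion.

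The natural candidate is $k'=\delta_2$. To verify this, I would observe that for every $v\in V_2$ we have $\delta_{V_2}(v)=\delta(v)\ge \delta_2$, while $\delta_{\overline{V_2}}(v)=0$ since $\overline{V_2}=\emptyset$. Therefore the defensive alliance condition
\[
\delta_{V_2}(v)\ \ge\ \delta_{\overline{V_2}}(v)+\delta_2
\]
holds at every $v\in V_2$, so $V_2$ is a defensive $\delta_2$-alliance in $G_2$ (and the value $\delta_2$ is admissible since $\delta_2\in\{-\Delta_2,\dots,\Delta_2\}$).

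Applying the previous theorem with $S_2=V_2$ and $k'=\delta_2$ to the hypothesis that $S_1\times V_2$ is a $k$-daf set in $G_1\times G_2$ then gives exactly that $S_1$ is a $(k-\delta_2)$-daf set in $G_1$. There is really no obstacle to overcome here: the whole content of the corollary is the elementary observation that the entire vertex set of any graph is a defensive alliance at level equal to its minimum degree, which plugs directly into the preceding theorem to yield this useful specialization.
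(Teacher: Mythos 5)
Your proposal is correct and follows exactly the paper's own route: the paper derives this corollary from the preceding theorem by observing that $V_2$ is a defensive $\delta_2$-alliance in $G_2$, which is precisely your argument with $S_2=V_2$ and $k'=\delta_2$. Your explicit verification that $\delta_{V_2}(v)=\delta(v)\ge\delta_2$ and $\delta_{\overline{V_2}}(v)=0$ just spells out what the paper leaves implicit.
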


By Theorem \ref{th1} (i) and Corollary \ref{otrocoro} we obtain the following result.
\begin{proposition}
Let  $G_1$ be a graph of maximum degree $\Delta_1$ and let $G_2$ be a $\delta_2$-regular graph.   For every $k\in \{\delta_2-\Delta_1,...,\Delta_1+\delta_2\}$,  $S_1\times V_2$ is a $k$-daf
set in $G_1\times G_2$ if and only if $S_1$ is a
$(k-\delta_2)$-daf set in $G_1$.
\end{proposition}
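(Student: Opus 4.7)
The plan is to observe that this proposition is essentially a packaging of Theorem \ref{th1} (i) and Corollary \ref{otrocoro} under the hypothesis that $G_2$ is $\delta_2$-regular, which forces $\Delta_2=\delta_2$ and thereby makes the two shifts in those results coincide.

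For the ``if'' direction, I would start by assuming that $S_1$ is a $(k-\delta_2)$-daf set in $G_1$, noting that the range $k\in\{\delta_2-\Delta_1,\ldots,\Delta_1+\delta_2\}$ translates precisely to $k-\delta_2\in\{-\Delta_1,\ldots,\Delta_1\}$, so the $(k-\delta_2)$-daf condition is meaningful. I would then set $S=S_1\times V_2$ and observe that $P_{V_1}(S)=S_1$. Applying Theorem \ref{th1} (i) with $i=1$ and $j=2$, since $P_{V_1}(S)$ is a $(k-\delta_2)$-daf set in $G_1$, it follows that $S$ is a $((k-\delta_2)+\Delta_2)$-daf set in $G_1\times G_2$. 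Because $G_2$ is $\delta_2$-regular, $\Delta_2=\delta_2$, so $(k-\delta_2)+\Delta_2=k$, and therefore $S_1\times V_2$ is a $k$-daf set in $G_1\times G_2$, as desired.

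For the ``only if'' direction, I would assume that $S_1\times V_2$ is a $k$-daf set in $G_1\times G_2$. Since $G_2$ is $\delta_2$-regular, its minimum degree is $\delta_2$, so Corollary \ref{otrocoro} applies verbatim and yields that $S_1$ is a $(k-\delta_2)$-daf set in $G_1$. This completes the equivalence.

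There is no genuine obstacle here; the only subtlety worth spelling out is the regularity identity $\Delta_2=\delta_2$, which is what makes the upper shift in Theorem \ref{th1} (i) coincide with the lower shift in Corollary \ref{otrocoro} and thus makes the equivalence exact rather than a one-sided implication.
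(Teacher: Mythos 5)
Your proof is correct and follows exactly the route the paper intends: the ``if'' direction is Theorem \ref{th1} (i) with the regularity identity $\Delta_2=\delta_2$ aligning the shift, and the ``only if'' direction is Corollary \ref{otrocoro} applied verbatim. The paper itself merely cites these two results without writing out the details, so your write-up is a faithful (and slightly more explicit) version of the same argument.
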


\section{Offensive $k$-alliance free sets in Cartesian product  graphs} \label{offensive}

\begin{theorem} \label{th1of}
Let $G_i=(V_i,E_i)$ be a graph,
$i\in \{1,2\}$, and let $S\subset V_1\times V_2$. If  $P_{V_i}(S)$  is a $k$-oaf set in $G_i$, then $S$ is a
$(k-\delta_j)$-oaf set in $G_1\times G_2$, where $\delta_j$ denotes the minimum degree of $G_j$ and  $j\in \{1,2\}$, $i\ne j$.
\end{theorem}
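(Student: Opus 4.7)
The plan is to fix $i=1$, $j=2$ without loss of generality (the other case is symmetric), and show that every subset $A\subseteq S$ fails to be an offensive $(k-\delta_2)$-alliance in $G_1\times G_2$. So I take an arbitrary $A\subseteq S$ and use the hypothesis on $P_{V_1}(S)$ by passing to the projection $P_{V_1}(A)\subseteq P_{V_1}(S)$.

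Since $P_{V_1}(S)$ is $k$-oaf in $G_1$, the subset $P_{V_1}(A)$ is not an offensive $k$-alliance of $G_1$, so there exists $a\in \partial P_{V_1}(A)$ with
\[
\delta_{P_{V_1}(A)}(a)<\delta_{\overline{P_{V_1}(A)}}(a)+k,\qquad\text{equivalently,}\qquad 2\delta_{P_{V_1}(A)}(a)<\delta_{G_1}(a)+k.
\]
Now I need to promote this witness vertex to a witness in $\partial A$. Since $a\in \partial P_{V_1}(A)$, there exists $a'\in P_{V_1}(A)$ adjacent to $a$ in $G_1$, and by definition of the projection there exists $b\in V_2$ with $(a',b)\in A$. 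The vertex $(a,b)$ is adjacent to $(a',b)$ in $G_1\times G_2$, and because $a\notin P_{V_1}(A)$ we have $(a,b)\notin A$. Hence $(a,b)\in \partial A$.

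The crucial observation is that, again because $a\notin P_{V_1}(A)$, no neighbor of $(a,b)$ of the form $(a,b')$ (with $b'\in N_{G_2}(b)$) can lie in $A$; thus every neighbor of $(a,b)$ in $A$ has the form $(a'',b)$ with $a''\in N_{G_1}(a)\cap P_{V_1}(A)$, giving the bound
\[
\delta_A(a,b)\le \delta_{P_{V_1}(A)}(a).
\]
Combining this with the displayed inequality above and with $\delta_{G_2}(b)\ge \delta_2$ yields
\[
2\delta_A(a,b)\le 2\delta_{P_{V_1}(A)}(a)<\delta_{G_1}(a)+k\le \delta_{G_1}(a)+\delta_{G_2}(b)+k-\delta_2,
\]
which rearranges (using $\delta_{G_1}(a)+\delta_{G_2}(b)=\delta_A(a,b)+\delta_{\overline{A}}(a,b)$) to $\delta_A(a,b)<\delta_{\overline{A}}(a,b)+k-\delta_2$. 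This shows $A$ is not an offensive $(k-\delta_2)$-alliance in $G_1\times G_2$, and since $A\subseteq S$ was arbitrary, $S$ is $(k-\delta_2)$-oaf as claimed.

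The only subtle step, and therefore the place to be careful, is the inequality $\delta_A(a,b)\le \delta_{P_{V_1}(A)}(a)$: this relies decisively on choosing $a\in \partial P_{V_1}(A)$ (so $a\notin P_{V_1}(A)$) rather than inside $P_{V_1}(A)$, which is exactly why the offensive-alliance definition, involving $\partial$, cooperates here and why the loss in the parameter is only $\delta_j$ (and not $\Delta_j$, as in the defensive case of Theorem~\ref{th1}(i)). The remaining arithmetic is a direct substitution.
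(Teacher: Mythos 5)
Your proof is correct and follows essentially the same route as the paper's: the same witness $a\in\partial P_{V_1}(A)$, the same lift to $(a,b)\in\partial A$ via a neighbor $a'\in P_{V_1}(A)$, and the same key bound $\delta_A(a,b)\le \delta_{P_{V_1}(A)}(a)$. The only difference is cosmetic bookkeeping (you rewrite the offensive condition as $2\delta_A(a,b)<\delta(a,b)+k-\delta_2$, while the paper bounds $\delta_{\overline{P_{V_1}(A)}}(a)\le\delta_{\overline{A}}(a,b)-\delta(b)$ directly), so there is nothing further to add.
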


\begin{proof}
If $P_{V_1}(S)$ is a $k$-oaf set in $G_1$ and $A\subseteq S$, then $P_{V_1}(A)\subseteq P_{V_1}(S)$ is a
$k$-oaf set in $G_1$. So, there exists $a\in \partial
P_{V_1}(A)$, such that
$\delta_{P_{V_1}(A)}(a)<\delta_{\overline{P_{V_1}(A)}}(a)+k$. Let $a'\in P_{V_1}(A)$ such that $a$ and $a'$ are adjacent, and let $Y_{a'}$ be the set of elements
of $A$ whose first component is $a'$. Thus, if $b\in P_{V_2}(Y_{a'})$, then   $(a,b)\in \partial A$, so we have
$$\delta_A(a,b)\le\delta_{P_{V_1}(A)}(a)<\delta_{\overline{P_{V_1}(A)}}(a)+k\le
\delta_{\overline{A}}(a,b)-\delta(b)+k\le\delta_{\overline{A}}(a,b)+k-\delta_2.$$
Therefore, $A$ is not an offensive ($k-\delta_2$)-alliance in
$G_1\times G_2$. The proof of the other case is completely analogous.
\end{proof}

From Theorem \ref{th1of} we conclude that for every $k_i$-oaf set $S_i$ in $G_i$, $i\in \{1,2\}$, the sets $S_1\times V_2$ and $V_1\times S_2$ are $(k_1-\delta_2)$-oaf  and  $(k_2-\delta_1)$-oaf,  respectively, in $G_1\times G_2$. Therefore, we obtain the following result.

\begin{corollary}\label{coronofensive}
Let $G_l$ be a graph of order $n_l$, maximum degree
$\Delta_l$ and minimum degree $\delta_l$,   $l\in \{1,2\}$.   Then, for every
$k\in \{2-\delta_j-\Delta_i,..., \Delta_i-\delta_j\}$, $\phi_k^o(G_1\times G_2)\ge
n_j\phi_{k+\delta_j}^o(G_i)$, where $i,j\in \{1,2\}$, $i\ne j$.
\end{corollary}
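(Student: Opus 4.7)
The plan is to deduce the inequality as an immediate consequence of Theorem \ref{th1of}, by choosing as witness a set of the form $S_i \times V_j$, where $S_i$ is a largest $(k+\delta_j)$-offensive alliance free set in $G_i$. Since Theorem \ref{th1of} already shows that a set in $V_1\times V_2$ whose projection onto $V_i$ is a $k'$-oaf set in $G_i$ must itself be a $(k'-\delta_j)$-oaf set in $G_1\times G_2$, the product set $S_i \times V_j$ transfers the ``oaf" property from $G_i$ to $G_1\times G_2$ with an index shift of $\delta_j$. The whole proof is then reduced to a change of variables $k' = k+\delta_j$ together with a cardinality count.

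Concretely, I would proceed in the following order. First, I would fix $i,j\in\{1,2\}$ with $i\ne j$, set $k' := k+\delta_j$, and check that the hypothesis $k\in\{2-\delta_j-\Delta_i,\ldots,\Delta_i-\delta_j\}$ corresponds exactly to $k'\in\{2-\Delta_i,\ldots,\Delta_i\}$, which is the admissible range in the definition of $\phi_{k'}^{o}(G_i)$; this guarantees that a $k'$-oaf set in $G_i$ exists. Next, I would let $S_i\subseteq V_i$ be a $k'$-oaf set in $G_i$ with $|S_i|=\phi_{k+\delta_j}^{o}(G_i)$, and consider $S := S_i\times V_j\subseteq V_1\times V_2$. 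Since $P_{V_i}(S)=S_i$ is $k'$-oaf in $G_i$, Theorem \ref{th1of} yields that $S$ is $(k'-\delta_j)$-oaf, i.e.\ $k$-oaf, in $G_1\times G_2$. Finally, computing $|S|=|S_i|\cdot n_j = n_j\,\phi_{k+\delta_j}^{o}(G_i)$ gives the desired lower bound on $\phi_k^{o}(G_1\times G_2)$.

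There is no real obstacle in this argument: all the work is hidden inside Theorem \ref{th1of}. The only point deserving a moment of care is the index bookkeeping, namely verifying that the stated range for $k$ is precisely the translate by $-\delta_j$ of the admissible range for $\phi_{k+\delta_j}^{o}(G_i)$, so that the witness $S_i$ we wish to use is guaranteed to be a well-defined object. Once this is checked, the rest of the proof is a one-line invocation of Theorem \ref{th1of} followed by the trivial count $|S_i \times V_j| = |S_i|\,n_j$.
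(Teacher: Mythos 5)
Your proposal is correct and follows exactly the paper's route: the paper likewise derives the corollary by observing that, for a $k_i$-oaf set $S_i$ in $G_i$, Theorem \ref{th1of} makes $S_1\times V_2$ (respectively $V_1\times S_2$) a $(k_1-\delta_2)$-oaf (respectively $(k_2-\delta_1)$-oaf) set in $G_1\times G_2$, and then counts $|S_i|\,n_j$. Your extra care with the reindexing $k'=k+\delta_j$ and the admissible range is a welcome explicit check that the paper leaves implicit.
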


Example of equality in the above result is the following. If we take $G_1=C_4$, $G_2=P_3$ and $k_2=2$, then
$\phi_0^o(C_4\times P_3)=8=4
\phi_{2}^o(P_3).$
\begin{theorem}
Let $G_i=(V_i,E_i)$ be a graph of minimum degree $\delta_i$ and maximum degree $\Delta_i$. If $S_i$ is a $k_i$-oaf set in $G_i$, $i\in \{1,2\}$, then for  every $k\in \{k', ...,\Delta_1+\Delta_2\}$,  $(S_1\times V_2)\cup (V_1\times S_2)$ is a $k$-oaf set in $G_1\times G_2$,  where $ k'= \max\left\{k_1-\delta_2, k_2-\delta_1,\min\{k_2+\Delta_1,k_1+\Delta_2\}\right\}$.
\end{theorem}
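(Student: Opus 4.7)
The plan is to use Remark \ref{monotoniafreeset} to reduce to the single case $k=k'$: writing $W:=(S_1\times V_2)\cup(V_1\times S_2)$, it suffices to show that every non-empty $A\subseteq W$ fails to be a $k'$-offensive alliance in $G_1\times G_2$, since monotonicity then upgrades the conclusion to every $k\ge k'$. I would split the argument into three cases according to the behavior of the projections $P_{V_i}(A)$.

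The easy cases are $P_{V_1}(A)\subseteq S_1$ and $P_{V_2}(A)\subseteq S_2$. In the first, $P_{V_1}(A)$ inherits the $k_1$-oaf property from $S_1$ (a subset of a $k$-oaf set is $k$-oaf), and Theorem \ref{th1of} applied to $A$ gives that $A$ is not a $(k_1-\delta_2)$-offensive alliance; since $k'\ge k_1-\delta_2$, $A$ is not a $k'$-offensive alliance. The second case is symmetric and yields the $(k_2-\delta_1)$ bound.

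The substantive case is when both $P_{V_1}(A)\not\subseteq S_1$ and $P_{V_2}(A)\not\subseteq S_2$. Set $B:=A\cap((V_1\setminus S_1)\times V_2)$. Because $A\subseteq W$, every $(a,b)\in B$ forces $b\in S_2$, so $P_{V_2}(B)\subseteq S_2$; the hypothesis $P_{V_1}(A)\not\subseteq S_1$ makes $B$ non-empty. Since $P_{V_2}(B)$ is a non-empty $k_2$-oaf set in $G_2$, it is not a $k_2$-offensive alliance, so there exists $b\in\partial P_{V_2}(B)$ with $\delta_{P_{V_2}(B)}(b)<\delta_{\overline{P_{V_2}(B)}}(b)+k_2$. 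Choose $b'\in P_{V_2}(B)$ adjacent to $b$ and $a'\in V_1\setminus S_1$ with $(a',b')\in B$, and take $(a',b)$ as the candidate witness.

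Four observations finish this case: (i) $(a',b)\notin A$, for otherwise $a'\notin S_1$ would force $b\in S_2$ and hence $(a',b)\in B$, putting $b\in P_{V_2}(B)$ against $b\in\partial P_{V_2}(B)$; so $(a',b)\in\partial A$; (ii) $G_1$-neighbors of $(a',b)$ contribute at most $\Delta_1$ to $\delta_A(a',b)$ trivially; (iii) every $G_2$-neighbor $(a',b'')\in A$ satisfies $b''\in P_{V_2}(B)$, again because $a'\notin S_1$; (iv) dually, every $b''\sim b$ with $b''\notin P_{V_2}(B)$ yields $(a',b'')\notin A$. Combining,
\[
\delta_A(a',b)\le \Delta_1+\delta_{P_{V_2}(B)}(b)<\Delta_1+\delta_{\overline{P_{V_2}(B)}}(b)+k_2\le \delta_{\overline{A}}(a',b)+(k_2+\Delta_1),
\]
so $A$ is not a $(k_2+\Delta_1)$-offensive alliance, and the symmetric construction with $B':=A\cap(V_1\times(V_2\setminus S_2))$ shows $A$ is not a $(k_1+\Delta_2)$-offensive alliance either. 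Hence $A$ is not a $\min\{k_2+\Delta_1,k_1+\Delta_2\}$-offensive alliance, and thus not a $k'$-offensive alliance. The main obstacle is this third case, and in particular the verification of (i) together with the matching of the projection inequality in $G_2$ to the product-graph inequality at $(a',b)$ via (iii) and (iv); the first two cases reduce directly to Theorem \ref{th1of}.
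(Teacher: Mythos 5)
Your proof is correct and follows essentially the same route as the paper: the same three-way case split on whether $A$ lies in $S_1\times V_2$, in $V_1\times S_2$, or neither, with the easy cases handled by Theorem \ref{th1of} and the remaining case by exhibiting a boundary witness built from $B=A\setminus(S_1\times V_2)$. The only cosmetic difference is that the paper works with the fiber $Y_a$ of $B$ over a fixed first coordinate while you project all of $B$ onto $V_2$ and choose $a'$ afterwards; both yield the same bounds $k_2+\Delta_1$ and $k_1+\Delta_2$.
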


\begin{proof}
Let $A\subseteq (S_1\times V_2)\cup (V_1\times S_2)$. By Theorem \ref{th1of} we deduce that, if $A\subseteq S_1\times V_2$, then $A$ is a $(k_1-\delta_2)$-oaf set in $G_1\times G_2$. Analogously, if $A\subseteq V_1\times S_2$, then $A$ is a $(k_2-\delta_1)$-oaf set in $G_1\times G_2$.

Now we suppose   $A\nsubseteq S_1\times V_2$ and  $A\nsubseteq V_1\times S_2$. Let $B=A\setminus (S_1\times V_2)$. For every $a\in P_{V_1}(B)$, the set $Y_a$, composed by the elements of $B$ whose first component is $a$, satisfies that $P_{V_2}(Y_a)$ is a $k_2$-oaf set in $G_2$. Then, there exists   $b\in \partial P_{V_2}(Y_a)$ such that  $\delta_{P_{V_2}(Y_a)}(b)<\delta_{\overline{P_{V_2}(Y_a)}}(b)+k_2$.  Also, notice that $(a,b)\in \partial A$. Thus,
$$\delta_A(a,b)\le \delta_{P_{V_2}(Y_a)}(b)+\delta(a)<\delta_{\overline{P_{V_2}(Y_a)}}(b)+k_2+\delta(a)\le \delta_{\overline{A}}(a,b)+k_2+\Delta_1.$$
We conclude that $A$ is not an offensive $(k_2+\Delta_1)$-alliance in $G_1\times G_2$. Analogously,  $A$ is not an offensive $(k_1+\Delta_2)$-alliance in $G_1\times G_2$. Therefore, the result follows.
\end{proof}

\begin{corollary}
 Let $G_i$ be a graph of order $n_i$, minimum degree $\delta_i$ and maximum degree $\Delta_i$, $i\in \{1,2\}$. Let $ k'=\max\left\{k_1-\delta_2, k_2-\delta_1,\min\{k_2+\Delta_1,k_1+\Delta_2\}\right\}$, where $k_i\in \{2-\Delta_i,...,\Delta_i\}$. Then, for every $k\in \{k',...,\Delta_1+\Delta_2\}$,
 $$\phi_k^o(G_1\times G_2)\ge
n_1\phi_{k_2}^o(G_2)+n_2\phi_{k_1}^o(G_1)-\phi_{k_1}^o(G_1)\phi_{k_2}^o(G_2).$$
\end{corollary}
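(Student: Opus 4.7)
The plan is to derive this corollary as an immediate consequence of the theorem that precedes it. First I would let $S_i \subseteq V_i$ be a $k_i$-oaf set of maximum cardinality in $G_i$, so that $|S_i|=\phi_{k_i}^o(G_i)$. Such sets exist because $k_i$ is chosen in the admissible range $\{2-\Delta_i,\dots,\Delta_i\}$. Applying the preceding theorem to $S_1$ and $S_2$ directly yields that $T := (S_1\times V_2)\cup (V_1\times S_2)$ is a $k$-oaf set in $G_1\times G_2$ for every $k\in\{k',\dots,\Delta_1+\Delta_2\}$, with the same $k'$.

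Next I would compute $|T|$ by elementary inclusion–exclusion. Since
\[
(S_1\times V_2)\cap(V_1\times S_2)=S_1\times S_2,
\]
we have
\[
|T|=|S_1|\,n_2 + n_1\,|S_2| - |S_1|\,|S_2| = n_2\phi_{k_1}^o(G_1)+n_1\phi_{k_2}^o(G_2)-\phi_{k_1}^o(G_1)\phi_{k_2}^o(G_2).
\]
Because $T$ is a $k$-oaf set in $G_1\times G_2$, its cardinality is a lower bound for $\phi_k^o(G_1\times G_2)$, and the claimed inequality follows.

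There is essentially no obstacle here: the whole argument is a one-line application of the theorem followed by a cardinality count, and no appeal to the monotonicity Remark \ref{monotoniafreeset} is even needed because the theorem already supplies the conclusion for the full range $k\in\{k',\dots,\Delta_1+\Delta_2\}$. The only point that deserves a brief verification is that the hypotheses on $k_i$ in the corollary match those tacitly required by the theorem (so that maximum $k_i$-oaf sets are defined), which is immediate from the range $k_i\in\{2-\Delta_i,\dots,\Delta_i\}$.
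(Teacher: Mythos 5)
Your proof is correct and matches the paper's (implicit) argument: the corollary is stated without proof precisely because it follows from the preceding theorem by taking maximum $k_i$-oaf sets $S_i$ and counting $|(S_1\times V_2)\cup(V_1\times S_2)|$ by inclusion--exclusion, exactly as you do. Nothing further is needed.
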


For instance, if we take $G_1=C_3$, $G_2=P_3$,   $k_1=1$ and $k_2=2$, then
$\phi_3^o(C_3\times P_3)=7=3
\phi_{2}^o(P_3)+3\phi_{1}^o(C_3)-\phi_{1}^o(C_3)\phi_{2}^o(P_3).$

\begin{figure}[h]
\begin{center}\label{fig1of}
\includegraphics[width=4.5cm]{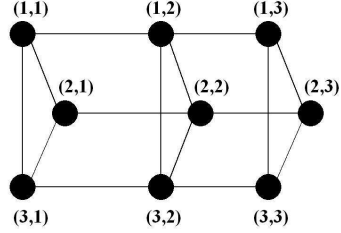}
\caption{The graph $G=(V,E)$ is the Cartesian product  of the cycle graph $C_3$ by the path graph $P_3$ where
$S=V \setminus \{(1,3),(2,3)\}$
is a maximum offensive $3$-alliance free set.}
\end{center}
\end{figure}

\section{Powerful $k$-alliance free sets in Cartesian product  graphs}  \label{Powerful}

Since for every graph $G$,  $\phi_{k}^p(G)\ge \max \{\phi_{k}^d(G),\phi_{k+2}^o(G)\}$, we have that lower bounds on $\phi_{k}^d(G)$ and $\phi_{k+2}^o(G)$ lead to lower bounds on  $\phi_{k}^p(G)$.
 So, by the results obtained in the above sections on $\phi_k^d(G_1\times G_2)$ and
 $\phi_{k+2}^o(G_1\times G_2)$ we deduce lower bounds on $\phi_k^p(G_1\times G_2)$.
\begin{figure}[h]\label{fignopowerful}
\begin{center}
\includegraphics[width=4.5cm]{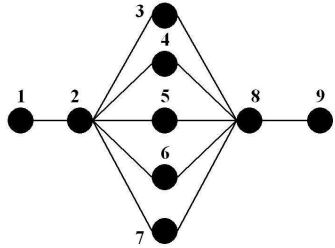}
\vspace{-0,2cm}
\caption{A graph $G=(V,E)$ where $V$ is a powerful  $2$-alliance free set, although $\{2,3,4,5,6,8\}$ is a defensive $2$-alliance and $\{3,4,5,6,7\}$ is an offensive  $4$-alliance.}
\end{center}
\end{figure}

We emphasize that there are graphs where $\phi_{k}^p(G)> \max \{\phi_{k}^d(G),\phi_{k+2}^o(G)\}$. For instance,  the graph of above figure satisfies $\phi_{2}^p(G)=9$ while $\phi_{2}^d(G)=8$ and $\phi_{4}^o(G)=7$.

\begin{theorem}\label{th1p}
Let $G_i=(V_i,E_i)$ be a simple graph of maximum degree $\Delta_i$ and minimum degree $\delta_i$,  $i\in \{1,2\}$, and let $S\subseteq  V_1\times V_2 $.
Then the following assertions hold.
\begin{itemize}
\item[{\rm (i)}] If  $P_{V_i}(S)$ is a $k_i$-paf set in $G_i$, then, for every $k\in \{k_i+\Delta_j,...,\Delta_i+\Delta_j-2\}$,  $S$ is a $k$-paf set in $G_1\times G_2$, where $j\in \{1,2\}$, $j\ne i.$
\item[{\rm (ii)}]
If for every  $i\in \{1,2\}$, $P_{V_i}(S)$ is a $k_i$-paf set in $G_i$,   then, for every $k\in \{k', ..., \Delta_1+\Delta_2-2\}$, $S$ is a $k$-paf set in $G_1\times G_2$, where $k'=\max\{k_1+k_2-1,\min\{ k_2-\delta_1,k_1-\delta_2\}\}$.
\end{itemize}
\end{theorem}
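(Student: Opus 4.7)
The plan is to combine the techniques of Theorem \ref{th1}(ii) and Theorem \ref{th1of} via a case analysis. Fix $A\subseteq S$. Since $P_{V_i}(A)\subseteq P_{V_i}(S)$ is still $k_i$-paf, $P_{V_i}(A)$ is not a powerful $k_i$-alliance of $G_i$, so for each $i\in\{1,2\}$ we have either (a) a vertex in $P_{V_i}(A)$ violating the defensive $k_i$-inequality, or (b) a vertex in $\partial P_{V_i}(A)$ violating the offensive $(k_i+2)$-inequality. This produces four top-level subcases, and in each one I would derive that $A$ is not a powerful $k$-alliance in $G_1\times G_2$.

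In the subcase where both projections exhibit type (a) failures I would reproduce the two-step computation from the proof of Theorem \ref{th1}(ii): picking $x\in P_{V_1}(A)$ where defensiveness fails, restricting to $A_x=A\cap(\{x\}\times V_2)$ and locating $y\in P_{V_2}(A_x)$ where defensiveness again fails, one assembles $\delta_A(x,y)<\delta_{\overline{A}}(x,y)+k_1+k_2-1$. Hence $A$ is not a defensive $k$-alliance for $k\ge k_1+k_2-1$, contributing the first term of $k'$. In the subcase where both projections exhibit type (b) failures I would apply the construction of Theorem \ref{th1of} to each projection and retain the sharper of the two conclusions: for $i\ne j$ we obtain a boundary vertex of $A$ certifying that $A$ is not an offensive $(k_i+2-\delta_j)$-alliance, and so $A$ is not an offensive $(k+2)$-alliance for $k\ge \min\{k_1-\delta_2,k_2-\delta_1\}$, contributing the second term. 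The two mixed subcases (defensive failure on one projection, offensive on the other) are handled by invoking Theorem \ref{th1}(i) on the projection that fails defensively and Theorem \ref{th1of} on the projection that fails offensively and keeping whichever contradicts the powerful $k$-alliance hypothesis first; Remark \ref{monotoniafreeset} then transfers the resulting bounds up to any $k\ge k'$.

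The chief obstacle is that the proof of Theorem \ref{th1}(ii) exploits the full $k_i$-daf property of the projections (it applies the defensive failure to the restricted set $P_{V_2}(A_x)\subseteq P_{V_2}(S)$), whereas here we only have the weaker $k_i$-paf hypothesis on $P_{V_2}(S)$. Carrying out the first subcase therefore requires an internal case split on whether $P_{V_2}(A_x)$ fails the defensive or the offensive inequality; the offensive branch is absorbed by a Theorem \ref{th1of}-style fallback. Once this is in hand, the remaining work is elementary bookkeeping verifying that the value $k'=\max\{k_1+k_2-1,\min\{k_2-\delta_1,k_1-\delta_2\}\}$ dominates each of the bounds arising from the four top-level subcases and the internal alternatives.
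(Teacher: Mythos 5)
Your skeleton (case analysis on whether each projection violates the defensive or the offensive inequality, recycling the computations of Theorems \ref{th1} and \ref{th1of}) is the right starting point, and you correctly spot the main obstacle: after choosing $x\in P_{V_1}(A)$ where defensiveness fails, the fibre projection $P_{V_2}(A_x)$ is only known to be a non-powerful $k_2$-alliance, so it may satisfy the defensive inequality and fail only the offensive one. But your resolution of that branch is where the proof breaks. A ``Theorem \ref{th1of}-style fallback'' applied to the offensive failure of $P_{V_2}(A_x)$ at $y\in\partial P_{V_2}(A_x)$ cannot kill the first-coordinate neighbours of $(x,y)$ (in Theorem \ref{th1of} this is done by choosing the first coordinate \emph{outside} $P_{V_1}(A)$, which costs the additive term $-\delta_j$; here $x$ lies \emph{inside} $P_{V_1}(A)$), so the best a pure offensive estimate gives is that $A$ is not an offensive $(k_2+\Delta_1+2)$-alliance, i.e.\ not a powerful $(k_2+\Delta_1)$-alliance. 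Since $k_1\le\Delta_1$, the level $k_2+\Delta_1$ always exceeds $k'$, and ``not a powerful $m$-alliance'' only propagates \emph{upward} in $m$; certifying failure above $k'$ proves nothing at $k'$. The missing idea is the paper's hybrid estimate: bound the first-coordinate neighbours of $(x,y)$ by the \emph{defensive} failure at $x$ and the second-coordinate neighbours by the \emph{offensive} failure at $y$, obtaining
$$\delta_A(x,y)\le \delta_{P_{V_1}(A)}(x)+\delta_{P_{V_2}(A_x)}(y)<\delta_{\overline{A}}(x,y)+k_1+k_2+1,$$
so that $A$ is not an offensive $(k_1+k_2+1)$-alliance and hence not a powerful $(k_1+k_2-1)$-alliance.

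The same defect infects your two mixed subcases. Invoking Theorem \ref{th1}(i) on the defensively failing projection and Theorem \ref{th1of} on the offensively failing one certifies non-powerfulness only at level $\min\{k_i+\Delta_j,\,k_{i'}-\delta_{j'}\}$, which in general is strictly larger than $k'$ (take both factors $d$-regular, $k_1=-d$, $k_2=d-2$: then $k'=-3$ while your certificate sits at $-2$), so the final ``bookkeeping'' does not close. The correct organisation is the paper's two-case split: if \emph{some} projection fails defensively, drill into the fibre over the failing vertex and use either the Theorem \ref{th1}(ii) computation or the hybrid estimate above, getting level $k_1+k_2-1$ in both subbranches; only when \emph{both} projections fail offensively do you fall back on the Theorem \ref{th1of} computation, getting level $\min\{k_1-\delta_2,k_2-\delta_1\}$. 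These two cases are exactly what produce $k'=\max\{k_1+k_2-1,\min\{k_2-\delta_1,k_1-\delta_2\}\}$.
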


\begin{proof}
Let $A\subseteq S$. We suppose $P_{V_i}(S)$ is a $k_i$-paf set in $G_i$ for some $i\in \{1,2\}$. Since $P_{V_i}(A)\subseteq P_{V_i}(S)$,  it follows that  $P_{V_i}(A)$ is not a powerful $k_i$-alliance in $G_i$.  If $P_{V_i}(A)$ is not a defensive $k_i$-alliance, by analogy to the proof of Theorem \ref{th1} (i), we obtain that $A$ is not a defensive $(k_i+\Delta_j)$-alliance in $G_1\times G_2$, $j\ne i$. If $P_{V_i}(A)$ is not an offensive $(k_i+2)$-alliance in $G_i$, then by analogy to the proof of Theorem \ref{th1of}, we obtain that $A$ is not an offensive $(k_i-\delta_j+2)$-alliance in $G_1\times G_2$, $j\ne i$. Since, $k_i+\Delta_j>k_i-\delta_j $, we obtain that $A$ is not a powerful $(k_i+\Delta_j)$-alliance in $G_1\times G_2$. Therefore, (i)  follows.

If for every  $l\in \{1,2\}$, $P_{V_l}(S)$ is a $k_l$-paf set in $G_l$,  then  $P_{V_l}(A)$ is not a powerful $k_l$-alliance in $G_l$.
Hence, we differentiate two cases.

Case (1): For some  $l\in \{1,2\}$, $P_{V_l}(A)$  is not a defensive $k_l$-alliance. We suppose $P_{V_1}(A)$  is not a defensive $k_1$-alliance. Hence,
there exists $x\in P_{V_1}(A)$ such that $\delta_{P_{V_1}(A)}(x)<\delta_{\overline{P_{V_1}(A)}}(x)+k_1$. Let $A_x\subseteq A$ be the set composed by the elements of $A$ whose first component is $x$. If $P_{V_2}(A_x)\subset P_{V_2}(S)$ is not a defensive $k_2$-alliance, then
 by analogy to the proof of Theorem \ref{th1} (ii), we obtain that
 $A$ is not a defensive $(k_1+k_2-1)$-alliance in $G_1\times G_2$. On the other hand, if $P_{V_2}(A_x) $ is a defensive $k_2$-alliance, then it is not an offensive $(k_2+2)$-alliance. Thus, there exists $y\in \partial P_{V_2}(A_x)$ such that $\delta_{P_{V_2}(A_x)}(y)<\delta_{\overline{P_{V_2}(A_x)}}(y)+(k_2+2)$.
 We note that $(x,y)\in \partial A$. Hence,
 \begin{align*}
  \delta_A(x,y)&\le \delta_{P_{V_1}(A)}(x)+\delta_{P_{V_2}(A_x)}(y)\\
  &< \delta_{\overline{P_{V_1}(A)}}(x)+\delta_{\overline{P_{V_2}(A_x)}}(y)+k_1+k_2+1\\
  &\le \delta_{\overline{A}}(x,y)+k_1+k_2+1.
 \end{align*}
 As a consequence,  $A$ is not an offensive $(k_1+k_2+1)$-alliance in $G_1\times G_2$. Thus, in this case, $A$ is not a powerful $(k_1+k_2-1)$-alliance in $G_1\times G_2$.

 Case (2): For every  $i\in \{1,2\}$, $P_{V_i}(A)$  is not an offensive $(k_i+2)$-alliance in $G_i$. In this case, as we have shown in the proof of (i),  $A$ is not an offensive $(k_i-\delta_j+2)$-alliance in $G_1\times G_2$, $j\in \{1,2\}$, $j\ne i$.

 As a consequence, for $k= \max\{ k_1+k_2-1,k_1-\delta_2,k_2-\delta_1\}$, $A$ is not a powerful $k$-alliance in $G_1\times G_2$. Hence, $S$ is a  $k$-paf set in $G_1\times G_2$.  Therefore, (ii)  follows.
\end{proof}

\begin{corollary}\label{coroproductpowerful}Let $G_l$ be a graph of order $n_l$, maximum degree
$\Delta_l$ and minimum degree $\delta_l$,
 $l\in \{1,2\}$.  Let $k_l \in\{ 1-\delta_l,...,\Delta_l-2\}$. Then the following assertions hold.

\begin{itemize}
\item[{\rm (i)}]
For every
$k\in \{\Delta_j-\Delta_i,..., \Delta_i+\Delta_j-2\}$, $(i,j\in \{1,2\}$, $i\ne j)$
$$\phi_k^p(G_1\times G_2)\ge
n_j\phi_{k-\Delta_j}^p(G_i).$$
\item[{\rm (ii)}]  For every $k\in \{k_1+k_2-1,...,\Delta_1+\Delta_2-2\}$,
$$\phi_{_{k}}^p(G_1\times G_2)\ge \phi_{_{k_1}}^p(G_1)\phi_{_{k_2}}^p(G_2)+\min\{n_1-\phi_{_{k_1}}^p(G_1),n_2-\phi_{_{k_2}}^p(G_2)\}.$$
\end{itemize}
\end{corollary}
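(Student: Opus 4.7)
The plan is to mirror the proof scheme of Corollary~\ref{CoroTh1def(i)} but invoking Theorem~\ref{th1p} in place of Theorem~\ref{th1}.

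For (i), I would simply take a $(k-\Delta_j)$-paf set $S_i$ of maximum cardinality in $G_i$ and consider $S=S_i\times V_j$ with $i\neq j$. Since $P_{V_i}(S)=S_i$ is a $(k-\Delta_j)$-paf set in $G_i$, Theorem~\ref{th1p}(i) applied with $k_i=k-\Delta_j$ yields that $S$ is a $k$-paf set in $G_1\times G_2$. Counting gives $|S|=n_j\,\phi^p_{k-\Delta_j}(G_i)$, which closes this part.

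For (ii), let $V_l=\{w_1^{(l)},\dots,w_{n_l}^{(l)}\}$ with $S_l=\{w_1^{(l)},\dots,w_{r_l}^{(l)}\}$ a maximum $k_l$-paf set in $G_l$, where $r_l=\phi^p_{k_l}(G_l)$, for $l\in\{1,2\}$. First I would check that the threshold $k'$ of Theorem~\ref{th1p}(ii) collapses to $k_1+k_2-1$: since $k_l\ge 1-\delta_l$, one has $k_2-\delta_1\le k_1+k_2-1$ and $k_1-\delta_2\le k_1+k_2-1$, so $\min\{k_2-\delta_1,k_1-\delta_2\}\le k_1+k_2-1$. Hence Theorem~\ref{th1p}(ii) guarantees that $S_1\times S_2$ is a $k$-paf set in $G_1\times G_2$ for every $k\ge k_1+k_2-1$.

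Next I enlarge $S_1\times S_2$ by a ``diagonal'' set $X=\{(w^{(1)}_{r_1+i},w^{(2)}_{r_2+i}):1\le i\le t\}$, where $t=\min\{n_1-r_1,n_2-r_2\}$, and set $S=(S_1\times S_2)\cup X$. For any nonempty $A\subseteq S$, if $A\subseteq S_1\times S_2$ then $A$ is not a powerful $k$-alliance by the previous paragraph; otherwise, pick $(x,y)\in A\cap X$. A neighbor of $(x,y)$ in $G_1\times G_2$ differs from $(x,y)$ in exactly one coordinate, and a short case check (either its unchanged coordinate would force membership in $S_1\times S_2$ to fail, as $y\notin S_2$ and $x\notin S_1$, or membership in $X$ to force equality of the matched diagonal indices, hence equality with $(x,y)$) shows that no such neighbor lies in $S$. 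Therefore $\delta_S(x,y)=0=\delta_A(x,y)$, while $\delta_{\overline{A}}(x,y)=\delta_{G_1}(x)+\delta_{G_2}(y)\ge \delta_1+\delta_2$. Combined with $k\ge k_1+k_2-1>-\delta_1-\delta_2$ (using $k_l\ge 1-\delta_l$), the defensive $k$-alliance condition $\delta_A(x,y)\ge \delta_{\overline{A}}(x,y)+k$ fails at $(x,y)$, so $A$ is not a defensive and hence not a powerful $k$-alliance.

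The step I expect to carry the most bookkeeping is the verification that diagonal vertices are isolated inside $S$; once this is established, counting $|S|=r_1r_2+t=\phi^p_{k_1}(G_1)\phi^p_{k_2}(G_2)+\min\{n_1-\phi^p_{k_1}(G_1),n_2-\phi^p_{k_2}(G_2)\}$ delivers the claimed bound. Both the reduction $k'=k_1+k_2-1$ and the arithmetic inequality $k_1+k_2-1>-\delta_1-\delta_2$ are immediate from the hypothesis $k_l\in\{1-\delta_l,\dots,\Delta_l-2\}$, so the rest is routine.
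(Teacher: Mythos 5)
Your proposal is correct and follows essentially the same route as the paper: part (i) via Theorem~\ref{th1p}(i) applied to $S_i\times V_j$, and part (ii) via the same diagonal augmentation $S=(S_1\times S_2)\cup X$ used in Corollary~\ref{CoroTh1def(i)}(ii). You merely make explicit two details the paper leaves implicit — that the hypothesis $k_l\ge 1-\delta_l$ collapses the threshold $k'$ of Theorem~\ref{th1p}(ii) to $k_1+k_2-1$, and that the diagonal vertices are isolated in $S$ so the defensive condition fails there — both of which check out.
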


\begin{proof}
By Theorem \ref{th1p} (i)  we conclude that for every $k_i$-paf set $S_i$ in $G_i$, $i\in \{1,2\}$, the sets $S_1\times V_2$ and $V_1\times S_2$ are, respectively,  $(k_1+\Delta_2)$-paf  and  $(k_2+\Delta_1)$-paf in $G_1\times G_2$. Therefore, (i) follows.

In order to prove (ii), let
 $V_1=\{u_1,u_2,...,u_{n_1}\}$ and $V_2=\{v_1,v_2,...,v_{n_2}\}$. Let $S_i$ be a $k_i$-paf set of maximum cardinality  in $G_i$, $i\in \{1,2\}$. We suppose $S_1=\{u_1,...,u_r\}$ and $S_2=\{v_1,...,v_s\}$. By Theorem  \ref{th1p} (ii) we deduce that, for $k\ge k_1+k_2-1$, $S_1\times S_2$ is a $k$-paf set in $G_1\times G_2$. Now
let $X=\{(u_{r+i},v_{s+i}), i=1,...,t\}$, where $t=\min\{n_1-r,n_2-s\}$ and let $S= X\cup (S_1\times S_2)$. Since,
for every $x\in X$, $\delta_S(x)=0$ and $k_i> -\delta_i$, $i\in \{1,2\}$,
we obtain that for every $A\subseteq S$, such that $A\cap X\ne \emptyset$, $A$ is not a defensive $(k_1+k_2-1)$-alliance  in $G_1\times G_2$. Hence, $S$ is a $k$-paf set for $k\ge k_1+k_2-1$. As a consequence,
 $\phi_{k}^p(G_1\times G_2)\ge |S|= \phi_{_{k_1}}^p(G_1)\phi_{_{k_2}}^p(G_2)+\min\{n_1-\phi_{_{k_1}}^p(G_1),n_2-\phi_{_{k_2}}^p(G_2)\}$.
\end{proof}

If  $G_1=C_{n_{_1}}$ is the cycle graph of order $n_1$  and $G_2$ is the graph in Figure 3, then,  by Corollary \ref{coroproductpowerful} (i),  we deduce  $\phi_4^p(G_1\times G_2)=n_1n_2$, that is, $\phi_4^p(G_1\times G_2)\ge n_1\phi_2^p(G_2)=n_1n_2$. Moreover, if  $G_1=T_{n_1}$ is a tree of order $n_1$ and maximum degree $\Delta_1\ge 4$ and $G_2$ is the graph in Figure 3, then  $\phi_2^p(G_1)=n_1$ and $\phi_2^p(G_2)=n_2=9$. Therefore,  by Corollary \ref{coroproductpowerful} (ii)  we deduce  $\phi_3^p(G_1\times G_2)=9n_1$.

\section{Conclusions} \label{concl}
This article is a contribution to the study of alliances in graphs. Particularly, we have dealt with defensive (offensive, powerful) $k$-alliance free sets in Cartesian product graphs. We have  shown several relationships between defensive (offensive, powerful) $k$-alliance free sets in Cartesian product graphs and  defensive (offensive, powerful) $k$-alliance free sets in the factor graphs. Our principal contributions are summarized below.

\noindent
Let $G_i=(V_i,E_i)$ be a graph of maximum degree $\Delta_i$ and minimum degree $\delta_i$,  $i\in \{1,2\}$:
\begin{itemize}

\item We have shown that if the projection of a set $S\subset V_1\times V_2$ over $V_i$ is a  defensive (offensive, powerful) $k_i$-alliance free set in $G_i$, then $S$ is a defensive (offensive, powerful) $k$-alliance free set in $G_1\times G_2$, where the values of $k$ depend on the values of $k_i$, $\delta_j$ and $\Delta_j$, with $j\in \{1,2\}$.

\item We have shown the relationships between the maximum cardinality of a defensive (offensive, powerful) $k_i$-alliance free set in $G_i$ and the maximum cardinality of a defensive (offensive, powerful) $k$-alliance free set in  $G_1\times G_2$, where the values of $k$ depend on the values of $K_i$, $\delta_j$ and $\Delta_j$, with $j\in \{1,2\}$.
\end{itemize}



\end{document}